\begin{document}
\newtheorem{theo}{Theorem}[section]
\newtheorem{defin}[theo]{Definition}
\newtheorem{rem}[theo]{Remark}
\newtheorem{lem}[theo]{Lemma}
\newtheorem{cor}[theo]{Corollary}
\newtheorem{prop}[theo]{Proposition}
\newtheorem{exa}[theo]{Example}
\newtheorem{exas}[theo]{Examples}
%
%
\subjclass[2010]{Primary 35J60  Secondary 35J25}
\keywords{Nonlocal problems; Kirchhoff's equation; Ground state solution; Nehari manifold}
\thanks{}
\title[Ground state solution of a nonlocal boundary value problem]{Ground state solution of a nonlocal boundary value problem}

\author[C. J. Batkam]{Cyril Joel Batkam}
\address{Cyril Joel Batkam \newline
D\'epartement de math\'ematiques,
\newline
Universit\'e de Sherbrooke,
\newline
Sherbrooke, Qu\'ebec, J1K 2R1, CANADA.}
\email{cyril.joel.batkam@usherbrooke.ca}

\maketitle
\begin{abstract}
In this paper, we apply the method of the Nehari manifold to study the Kirchhoff type equation
\begin{equation*}
-\Big(a+b\int_\Omega|\nabla u|^2dx\Big)\Delta u=f(x,u)
\end{equation*}
submitted to Dirichlet boundary conditions. Under a general $4-$superlinear condition on the nonlinearity $f$, we prove the existence of a ground state solution; that is a nontrivial solution which has least energy among the set of nontrivial solutions. In case which $f$ is odd with respect to the second variable, we also obtain the existence of infinitely many solutions. Under our assumptions the Nehari manifold does not need to be of class $\mathcal{C}^1$.
\end{abstract}

%
\section{Introduction}
In this paper, we will be concerned with the nonlocal boundary value problem
\begin{equation}\label{s}\tag{S}
    \left\{
      \begin{array}{ll}
        -\Big(a+b\int_\Omega|\nabla u|^2dx\Big)\Delta u=f(x,u),\quad \text{in }\Omega & \hbox{} \\
        u=0 \text{ on }\partial\Omega, & \hbox{}
      \end{array}
    \right.
\end{equation}
where $\Omega$ is a bounded domain in $\mathbb{R}^N$ with smooth boundary ($N=1,2,3$), $a>0$ and $b>0$, and the nonlinearity $f:\overline{\Omega}\times\mathbb{R}\to\mathbb{R}$ satisfies the following conditions.
\begin{enumerate}
  \item[$(f_1)$] $f$ is continuous and there exists a constant $c>0$ such that $|f(x,u)|\leq c\big(1+|u|^{p-1}\big)$, where $p>4$ for $N=1,2$ and $4<p<2^\star:=2N/(N-2)$ for $N=3$.
  \item[$(f_2)$] $f(x,u)=\circ (u)$ uniformly in $x$ as $|u|\to0$.
  \item[$(f_3)$] $F(x,u)/u^4\to\infty$ uniformly in $x$ as $|u|\to\infty$, where $F(x,u)=\int_0^u f(x,s)ds$.
  \item[$(f_4)$] $u\mapsto f(x,u)/u^3$ is positive for $u\neq0$, nonincreasing on $(-\infty,0)$ and nondecreasing on $(0,\infty)$.
\end{enumerate}
 Usually, in the study of \eqref{s} the following Ambrosetti-Rabinowitz's type condition is used:
\begin{equation}\label{AR}\tag{AR}
    \exists\mu>4,\,\, R>0\,\,;\,\, 0<\mu F(x,u)\leq uf(x,u)\quad\forall x\in\overline{\Omega},\,\,|u|\geq R.
\end{equation}
Integrating \eqref{AR} yields the existence of constants $c_1,c_2>0$ such that $F(x,u)\geq c_1|u|^\mu-c_2$ for all $u$, therefore \eqref{AR} is stronger that $(f_3)$. It is well known that \eqref{AR} is mainly used to verify the boundedness of the Palais-Smale sequences of the energy functional, and without it the problem becomes more complicated. However, there are many functions which are $4-$superlinear but do not satisfy \eqref{AR}. An example of $f$ satisfying assumptions $(f_1)-(f_4)$ which does not satisfy \eqref{AR} is given at the end of the paper.
\par We call problem \eqref{s} nonlocal because of the presence of the term $\int_\Omega|\nabla u|^2dx,$ which implies that the first equation in \eqref{s} is no longer a pointwise equality. This causes some mathematical difficulties which make the study of such problems particularly interesting. On the other hand, for a physical point of view problem \eqref{s} is related to the stationary analogue of the hyperbolic equations
\begin{equation*}
  u_{tt}-\Big(a+b\int_\Omega|\nabla u|^2dx\Big)\Delta u=f(x,u),
\end{equation*}
proposed by Kirchhoff \cite{Kir} as an extension of the classical d'Alembert wave equations for free vibrations of elastic strings. The Kirchhoff's model takes into account the changing in length of the string produced by transverse vibrations. Problem \eqref{s} has been widely studied by variational methods since the paper of Lions \cite{Lions}, where an abstract framework to attack it was introduced. In \cite{Per-Z} Perera and Zhang considered \eqref{s} in the case that $f$ is asymptotically linear at $0$ and asymptotically $4-$linear at infinity, and they obtained a nontrivial solution by using the Yang index and
critical group. In \cite{He-Zou}, under condition \eqref{AR} and without condition \eqref{AR}, He and Zou obtained the existence of infinitely many solutions of \eqref{s} by using the fountain theorems. In \cite{Alves} Alves et al. considered \eqref{s} with a critical term and obtained a nontrivial solution of mountain pass type. In \cite{Chen,WLJF,Junior} the authors obtained some existence results for a Kirchhoff's type problem by using the Nehari manifold approach.
\par In this paper, we also study \eqref{s} via a reduction on the Nehari manifold. We are firstly interested in the existence of a ground state solution of \eqref{s}; that is a nontrivial solution which has least energy among the set of nontrivial solutions of \eqref{s}. Let $X:=H_0^1(\Omega)$ be the usual Sobolev space endowed with the inner product $\big<\cdot,\cdot\big>$ and the associated norm $\|\cdot\|$,
\begin{equation*}
    \big<u,v\big>=\int\limits_\Omega\nabla u\nabla vdx,\quad\quad \|u\|^2=\big<u,u\big>.
\end{equation*}
Under assumption $(f_1)$, the solutions of \eqref{s} are critical points of the functional $\Phi\in\mathcal{C}^1(X,\mathbb{R})$,
\begin{equation}\label{phi}
   \Phi(u)=\frac{a}{2}\|u\|^2+\frac{b}{4}\|u\|^4-\int\limits_\Omega F(x,u)dx.
\end{equation}
 We define the Nehari manifold
\begin{equation}\label{nehari}
    \mathcal{N}:=\Big\{u\in X\backslash\{0\}\,\,;\,\, \big<\Phi'(u),u\big>=0\Big\}.
\end{equation}
Then any nontrivial solution of \eqref{s} belongs to $\mathcal{N}$. We would like to show that $\inf_{\substack{\mathcal{N}}}\Phi$ is attained at some $u_0\in\mathcal{N}$ which is a critical point of $\Phi$. Since under our assumptions on $f$ above we do not know if the sub-manifold $\mathcal{N}$ of $X$ is of class $\mathcal{C}^1$, we cannot apply the minimax theorems in \cite{R,S,W} directly to $\mathcal{N}$ in order to extract the critical points of the functional $\Phi$. To circumvent this difficulty we follow, as in \cite{WLJF,Junior}, an approach by Szulkin and Weth \cite{S-W1}.
Our main result is the following.
\begin{theo}\label{main}
Let $a>0$ and $b>0$. If $f$ satisfies $(f_1)-(f_4)$, then \eqref{s} has a ground state solution.
Moreover, if in addition
\begin{enumerate}
  \item [$(f_5)$] $f(x,-u)=-f(x,u)$ for all $(x,u)\in\overline{\Omega}\times\mathbb{R}$,
\end{enumerate}
then \eqref{s} has infinitely many solutions.
\end{theo}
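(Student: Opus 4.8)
The plan is to follow the Szulkin--Weth reduction to the unit sphere, which is precisely designed to handle a Nehari manifold that need not be of class $\mathcal C^1$. Writing $\langle\Phi'(u),u\rangle=a\|u\|^2+b\|u\|^4-\int_\Omega f(x,u)u\,dx$, I would first show that every ray through the origin meets $\mathcal N$ exactly once: for fixed $u\in X\setminus\{0\}$ set $g_u(t)=\Phi(tu)$ and rewrite, for $t>0$,
\[
\frac{g_u'(t)}{t^3}=\frac{a\|u\|^2}{t^2}+b\|u\|^4-\int_\Omega\frac{f(x,tu)}{(tu)^3}\,u^4\,dx .
\]
By $(f_4)$ the integrand is positive and monotone in $t$, so the right-hand side is strictly decreasing; by $(f_2)$ it is positive for $t$ small, while $(f_3)$ together with $(f_4)$ forces it to be negative for $t$ large. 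Hence there is a unique $t(u)>0$ with $t(u)u\in\mathcal N$, and $t(u)u$ is the strict global maximum of $g_u$ on $(0,\infty)$. This is assumption (A1) of the abstract framework; the standard estimates from $(f_1)$--$(f_2)$ (namely $|f(x,u)|\le\varepsilon|u|+C_\varepsilon|u|^{p-1}$ and the corresponding bound on $F$) then give that $\mathcal N$ is bounded away from $0$, that $t(\cdot)$ is continuous, and that the map $m:S\to\mathcal N$, $m(w)=t(w)w$, is a homeomorphism of the unit sphere $S$ onto $\mathcal N$ with $\psi:=\Phi\circ m\in\mathcal C^1(S,\mathbb R)$; moreover the critical points of $\psi$ correspond exactly to the $w\in S$ for which $m(w)$ is a nontrivial critical point of $\Phi$, and $\inf_S\psi=\inf_{\mathcal N}\Phi=:c>0$.

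The crux -- and the step I expect to be the main obstacle, since condition \eqref{AR} is absent -- is compactness, which I would derive from coercivity of $\Phi$ on $\mathcal N$. Suppose $u_n\in\mathcal N$ with $\|u_n\|\to\infty$, and set $v_n=u_n/\|u_n\|$, so that up to a subsequence $v_n\rightharpoonup v$ in $X$ and $v_n\to v$ in $L^p(\Omega)$. If $v=0$, then for every fixed $s>0$ the compact embedding gives $\int_\Omega F(x,sv_n)\,dx\to0$, so by the maximality from (A1), $\Phi(u_n)\ge\Phi(sv_n)\to\frac a2 s^2+\frac b4 s^4$; letting $s\to\infty$ contradicts the boundedness of $\Phi(u_n)$. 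If $v\neq0$, then $|u_n(x)|\to\infty$ on $\{v\neq0\}$, and since $(f_4)$ yields $F\ge0$, Fatou's lemma applied to
\[
\frac{\Phi(u_n)}{\|u_n\|^4}=\frac{a}{2\|u_n\|^2}+\frac b4-\int_\Omega\frac{F(x,u_n)}{u_n^4}\,v_n^4\,dx
\]
together with $(f_3)$ forces this ratio to $-\infty$, while boundedness of $\Phi(u_n)$ forces it to $0$, a contradiction. Hence $\Phi$ is coercive on $\mathcal N$, so every minimizing (and, later, every Palais--Smale) sequence lying on $\mathcal N$ is bounded.

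With coercivity in hand, attainment follows in the usual way: take $u_n\in\mathcal N$ with $\Phi(u_n)\to c$; it is bounded, so $u_n\rightharpoonup u_0$ in $X$ and $u_n\to u_0$ in $L^p$. The lower bound $\|u_n\|_p^p\ge c_0>0$ coming from $u_n\in\mathcal N$ and the subcritical estimates survives in the limit, so $u_0\neq0$; projecting, $m(u_0/\|u_0\|)=t_0u_0\in\mathcal N$, and weak lower semicontinuity of the norm together with $\int_\Omega F(x,t_0u_n)\,dx\to\int_\Omega F(x,t_0u_0)\,dx$ gives $\Phi(t_0u_0)\le\liminf\Phi(t_0u_n)\le\liminf\Phi(u_n)=c$, whence $\Phi(t_0u_0)=c$. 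Thus $c$ is attained, and because $\psi$ is a genuine $\mathcal C^1$ functional on the manifold $S$, a minimizer of $\psi$ is automatically a critical point of $\psi$; by the correspondence recalled above the associated point of $\mathcal N$ is a nontrivial critical point of $\Phi$, i.e. a ground state solution of \eqref{s}.

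For the multiplicity statement I would exploit the symmetry. Under $(f_5)$ the functional $\Phi$ is even, hence $\psi$ is an even $\mathcal C^1$ functional on the symmetric manifold $S$. Coercivity on $\mathcal N$ together with the compact embedding $X\hookrightarrow L^p(\Omega)$ yields the Palais--Smale condition for $\psi$ at every level, since a bounded Palais--Smale sequence converges strongly after passing to the limit in $\Phi'$ via the compactness of the lower-order terms. I would then run the standard $\mathbb Z_2$ minimax over sets of Krasnoselskii genus, setting $c_k=\inf_{\mathrm{genus}(A)\ge k}\ \max_{w\in A}\psi(w)$; each $c_k$ is a critical value of $\psi$, the sequence $(c_k)$ is nondecreasing, and the $4$-superlinear growth $(f_3)$ forces $c_k\to\infty$. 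This produces infinitely many distinct critical points of $\psi$ and therefore, through $m$, infinitely many solutions of \eqref{s}.
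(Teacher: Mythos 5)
Your proposal is correct and follows essentially the same strategy as the paper: the Szulkin--Weth reduction of $\Phi$ to the unit sphere $S$ via the fibering maps (with $(f_4)$ giving uniqueness of the maximum along rays, and $(f_1)$--$(f_3)$ giving existence), and the same two-case argument ($v=0$ versus $v\neq 0$, with Fatou's lemma and $F\geq 0$) for boundedness of Palais--Smale sequences on $\mathcal{N}$, followed by strong convergence through the compact embedding. The two places where you deviate are worth noting. First, for existence you minimize directly: bounded minimizing sequence, $u_0\neq 0$ from the uniform $L^p$ lower bound on $\mathcal{N}$, projection $t_0u_0\in\mathcal{N}$, and weak lower semicontinuity combined with the maximality $\Phi(t_0u_n)\leq\Phi(u_n)$; this is a clean, slightly more elementary route than the paper, which instead applies Ekeland's variational principle to $\Psi=\Phi\circ M$ on $S$ and then uses the Palais--Smale condition to pass to a limit. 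Both are valid. Second, for multiplicity the paper simply invokes Szulkin's Ljusternik--Schnirelmann theorem on $\mathcal{C}^1$ manifolds (even, bounded below, Palais--Smale $\Rightarrow$ infinitely many pairs of critical points), whereas you run the genus minimax $c_k=\inf_{\mathrm{genus}(A)\geq k}\max_A\psi$ by hand and then deduce infinitude of solutions from the claim that $(f_3)$ forces $c_k\to\infty$. That last claim is asserted without proof and is not obviously immediate; fortunately it is also unnecessary: the standard genus argument already yields infinitely many distinct pairs of critical points, because if $c_k=c_{k+1}$ for some $k$ then the (compact, by Palais--Smale) critical set at that level has genus at least $2$ and is therefore infinite. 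If you replace the unproven divergence claim by this multiplicity-at-a-level argument, or simply cite the Ljusternik--Schnirelmann result as the paper does, your proof is complete.
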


\section{Preliminaries}\label{section1}
Throughout the paper we denote by $|\cdot|_r$ the norm of the Lebesgue space $L^r(\Omega)$.\\
We consider $X$, $\Phi$ and $\mathcal{N}$ as defined in the introduction.\\
A standard argument shows that:
\begin{lem}
If $(f_1)$ is satisfied, then $\Phi\in\mathcal{C}^1(X,\mathbb{R})$ and we have
\begin{equation}\label{phiprime}
    \big<\Phi'(u),v\big>=\big(a+b\|u\|^2\big)\int\limits_\Omega\nabla u\nabla vdx-\int\limits_\Omega vf(x,u)dx.
\end{equation}
\end{lem}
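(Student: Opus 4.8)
The plan is to decompose $\Phi$ as $\Phi=J_1+J_2-\Psi$, where $J_1(u)=\tfrac{a}{2}\|u\|^2$, $J_2(u)=\tfrac{b}{4}\|u\|^4$ and $\Psi(u)=\int_\Omega F(x,u)\,dx$, and to treat the three pieces separately. The first two depend on $u$ only through the inner product: $J_1=\tfrac{a}{2}\langle u,u\rangle$ and $J_2=\tfrac{b}{4}\langle u,u\rangle^2$ are polynomials in the bounded symmetric bilinear form $\langle\cdot,\cdot\rangle$, hence of class $\mathcal{C}^\infty$, and a direct computation gives $\langle J_1'(u),v\rangle=a\langle u,v\rangle$ and $\langle J_2'(u),v\rangle=b\|u\|^2\langle u,v\rangle$. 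Summing these yields the first term $(a+b\|u\|^2)\int_\Omega\nabla u\nabla v\,dx$ of \eqref{phiprime}. All the analytic difficulty is therefore concentrated in showing that the nonlinear term $\Psi$ is of class $\mathcal{C}^1$ with $\langle\Psi'(u),v\rangle=\int_\Omega f(x,u)v\,dx$.

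To set this up, I would first record the consequences of $(f_1)$: integrating the growth bound gives $|F(x,u)|\le C(1+|u|^p)$, and since $p$ lies in the subcritical range for $N=1,2,3$, the Sobolev embedding $X\hookrightarrow L^p(\Omega)$ is continuous; hence $\Psi(u)$ is finite for every $u\in X$ and $|v|_p\le C\|v\|$. The central object is the Nemytskii (superposition) operator $N_f(u):=f(\cdot,u(\cdot))$. Because $(p-1)p'=p$ with $p'=p/(p-1)$ the conjugate exponent, $(f_1)$ shows that $N_f$ maps $L^p(\Omega)$ into $L^{p'}(\Omega)$, and by the classical theorem of Krasnoselskii it is continuous and maps bounded sets to bounded sets.

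I would then establish Gâteaux differentiability of $\Psi$ by writing, for fixed $u,v\in X$ and $0<|t|\le1$,
\[
\frac{\Psi(u+tv)-\Psi(u)}{t}=\int_\Omega\frac{F(x,u+tv)-F(x,u)}{t}\,dx
=\int_\Omega f(x,u+\theta tv)\,v\,dx,
\]
where $\theta=\theta(x,t)\in(0,1)$ comes from the mean value theorem. The integrand converges a.e.\ to $f(x,u)v$ as $t\to0$ and is dominated by $c\big(1+(|u|+|v|)^{p-1}\big)|v|\in L^1(\Omega)$ (Hölder's inequality, since the first factor lies in $L^{p'}$ and $v\in L^p$); dominated convergence then gives $\langle\Psi'(u),v\rangle=\int_\Omega f(x,u)v\,dx$. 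Finally, to upgrade this to $\mathcal{C}^1$ I would estimate, via Hölder and the Sobolev inequality,
\[
\|\Psi'(u)-\Psi'(w)\|_{X^*}
=\sup_{\|v\|\le1}\Big|\int_\Omega\big(f(x,u)-f(x,w)\big)v\,dx\Big|
\le C\,|N_f(u)-N_f(w)|_{p'},
\]
so that if $u_n\to u$ in $X$ then $u_n\to u$ in $L^p$, and the continuity of $N_f\colon L^p\to L^{p'}$ forces $\Psi'(u_n)\to\Psi'(u)$ in $X^*$. This proves $\Psi\in\mathcal{C}^1(X,\mathbb{R})$, and combining the three pieces gives $\Phi\in\mathcal{C}^1(X,\mathbb{R})$ together with \eqref{phiprime}.

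The only genuine obstacle is the continuity of $\Psi'$, that is, the continuity of the Nemytskii operator from $L^p$ into $L^{p'}$ and the justification of the passage to the limit under the integral sign; everything else is bookkeeping. I would stress that subcriticality of $p$ is exactly what makes the embedding $X\hookrightarrow L^p$ available, so that the duality estimate above closes.
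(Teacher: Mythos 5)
Your proof is correct and is precisely the ``standard argument'' that the paper invokes without writing out: the paper states this lemma with only the remark that a standard argument establishes it, and your decomposition into the quadratic term, the quartic term, and the Nemytskii part --- with the mean value theorem plus dominated convergence for G\^{a}teaux differentiability and Krasnoselskii continuity of $N_f\colon L^p\to L^{p/(p-1)}$ to upgrade to $\mathcal{C}^1$ --- is exactly that argument (cf.\ the appendix of Willem's \emph{Minimax Theorems}, which the paper cites). Nothing is missing; the subcriticality of $p$ is correctly identified as the point that makes the Sobolev embedding and the duality estimate work.
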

\par It is well known that the Nehari manifold $\mathcal{N}$ is closely linked to the behavior of the map $\alpha_u:[0,\infty)\to\mathbb{R}$ defined by
\begin{equation}\label{alpha}
    \alpha_u(t):=\Phi(tu),
\end{equation}
where $u\in X$ is fixed. Such a map is known as a fibering map which was introduced by Dr\'{a}bek and Pohozaev in \cite{Dra} and discussed in Brown and Zhang \cite{Bro}. The following result shows that $\alpha_u$ has a unique maximum point if $u\neq0$.
\begin{lem}\label{tu}
Assume that $(f_1)-(f_4)$ are satisfied. Then for any $u\in X\backslash\{0\}$, there exists a unique $t_u>0$ such that $\alpha'_u(t)>0$ for every $t\in(0,t_u)$ and $\alpha'_u(t)<0$ for every $t>t_u$.
\end{lem}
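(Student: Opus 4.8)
The plan is to analyze the fibering map $\alpha_u$ through the sign of its derivative. A direct computation from \eqref{phi} gives
\begin{equation*}
\alpha_u(t)=\frac{a}{2}t^2\|u\|^2+\frac{b}{4}t^4\|u\|^4-\int_\Omega F(x,tu)\,dx,\qquad \alpha_u'(t)=at\|u\|^2+bt^3\|u\|^4-\int_\Omega u\,f(x,tu)\,dx,
\end{equation*}
and since $t\,\alpha_u'(t)=\langle\Phi'(tu),tu\rangle$, a zero $t_u>0$ of $\alpha_u'$ is exactly a value for which $t_u u\in\mathcal{N}$. First I would factor out the cubic scaling: writing $u\,f(x,tu)=t^3u^4\,\frac{f(x,tu)}{(tu)^3}$ on $\{u\neq0\}$, dividing by $t^3$ and setting
\begin{equation*}
h(t):=\frac{\alpha_u'(t)}{t^3}=\frac{a\|u\|^2}{t^2}+b\|u\|^4-\int_\Omega u^4\,\frac{f(x,tu)}{(tu)^3}\,dx,\qquad t>0.
\end{equation*}
Because $t^3>0$, the sign of $\alpha_u'$ coincides with that of $h$, so it suffices to prove that $h$ vanishes exactly once, is positive before that point and negative after.

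The central observation is that $h$ is strictly decreasing. The first term $a\|u\|^2/t^2$ is strictly decreasing (here $\|u\|>0$ since $u\neq0$) and the second is constant. For the integral term I would use $(f_4)$: for fixed $x$ with $u(x)\neq0$ the map $t\mapsto f(x,tu(x))/(tu(x))^3$ is nondecreasing on $(0,\infty)$. Indeed, if $u(x)>0$ then $tu(x)$ increases and $s\mapsto f(x,s)/s^3$ is nondecreasing on $(0,\infty)$, while if $u(x)<0$ then $tu(x)$ decreases and $s\mapsto f(x,s)/s^3$ is nonincreasing on $(-\infty,0)$, so in both cases the value is nondecreasing in $t$. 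Multiplying by $u^4\geq0$ and integrating, $\int_\Omega u^4 f(x,tu)/(tu)^3\,dx$ is nondecreasing, hence $-\int_\Omega(\cdots)$ is nonincreasing; adding the strictly decreasing first term makes $h$ strictly decreasing and continuous on $(0,\infty)$.

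It then remains to control the two endpoints. As $t\to0^+$, the monotonicity just used gives $f(x,tu)/(tu)^3\le f(x,u)/u^3$ for $t\le1$, so the integral is bounded above by the fixed finite quantity $\int_\Omega u\,f(x,u)\,dx$ (finite by $(f_1)$); since $a\|u\|^2/t^2\to+\infty$, we obtain $h(t)\to+\infty$. For $t\to\infty$ I would invoke $(f_3)$: for any $M>0$ there is $C_M$ with $F(x,s)\ge Ms^4-C_M$, whence $\alpha_u(t)\le\frac{a}{2}t^2\|u\|^2+\big(\frac{b}{4}\|u\|^4-M|u|_4^4\big)t^4+C_M|\Omega|$; choosing $M$ so large that $\frac{b}{4}\|u\|^4-M|u|_4^4<0$ (possible since $|u|_4>0$) yields $\alpha_u(t)\to-\infty$, so $\alpha_u'$, and hence $h$, must take negative values for large $t$. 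A strictly decreasing continuous function that is positive near $0$ and negative for large $t$ has a unique zero $t_u>0$, with $h>0$ on $(0,t_u)$ and $h<0$ on $(t_u,\infty)$; transferring signs through $\alpha_u'=t^3h$ proves the claim.

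The step I expect to require the most care is the monotonicity of the integrand, which hinges on correctly matching the sign of $u(x)$ to the one-sided monotonicity in $(f_4)$, and on checking that $u^4 f(x,tu)/(tu)^3$ is genuinely integrable (controlled near $0$ via $(f_2)$ and $(f_4)$, and for large argument via $(f_1)$). The endpoint at infinity is the only place where $(f_4)$ alone is insufficient and $(f_3)$ is essential; alternatively one can bypass $\alpha_u$ by observing that $(f_3)$ and $(f_4)$ together force $f(x,s)/s^3\to\infty$ as $|s|\to\infty$, so that $\int_\Omega u^4 f(x,tu)/(tu)^3\,dx\to\infty$ by Fatou's lemma, giving $h(t)\to-\infty$ directly.
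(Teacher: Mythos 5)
Your proof is correct, and its core mechanism is the same as the paper's: divide $\alpha_u'(t)$ by $t^3$ and let $(f_4)$ turn the problem into one about a strictly monotone function. Indeed the paper rewrites $\alpha_u'(t)=0$ as $b\|u\|^4=-\frac{a\|u\|^2}{t^2}+\frac{1}{t^3}\int_\Omega uf(x,tu)\,dx$ and observes that the right-hand side is strictly increasing; your $h(t)$ is exactly $b\|u\|^4$ minus that quantity, so the uniqueness step is identical up to sign. Where you genuinely diverge is in how the sign change of $\alpha_u'$ is produced. The paper first establishes that $\alpha_u$ has a positive interior maximum: $\alpha_u(t)>0$ for small $t$ via the $\varepsilon$-estimates \eqref{ine} coming from $(f_1)$--$(f_2)$, and $\alpha_u(t)\to-\infty$ via $(f_3)$; the sign statement of the lemma is then an implicit consequence of ``maximum point plus at most one critical point.'' You instead work with $h=\alpha_u'/t^3$ throughout: near $t=0$ you bound the integral term by $\int_\Omega uf(x,u)\,dx$ using the same $(f_4)$-monotonicity (so $(f_2)$ is never invoked), conclude $h(t)\to+\infty$, and at infinity you reuse the paper's $(f_3)$ computation (or, as you remark, Fatou's lemma together with the fact that $(f_3)$ and $(f_4)$ force $f(x,s)/s^3\to\infty$, which lets you avoid $\alpha_u$ altogether). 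Your route is slightly more economical---one monotone continuous function, two endpoint limits, intermediate value theorem---and it delivers the stated sign pattern of $\alpha_u'$ explicitly rather than by inference. The only thing the paper's detour through positivity of $\alpha_u$ near $0$ buys is that the same estimate $\Phi(w)\geq\frac{a}{4}\|w\|^2+\frac{b}{4}\|w\|^4-c_3\|w\|^p$ is recycled in the proof of Lemma \ref{tusuite}, where a lower bound on $\alpha_u(t_u)$ (not just on $\alpha_u'$) is genuinely needed.
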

\begin{proof}
$(f_1)$ and $(f_2)$ imply that
\begin{equation}\label{ine}
  \forall\varepsilon>0,\,\, \exists c_\varepsilon>0 \,\,;\,\ |f(x,u)|\leq \varepsilon|u|+c_\varepsilon|u|^{p-1} \text{ and } |F(x,u)|\leq \varepsilon|u|^2+c_\varepsilon|u|^p.
\end{equation}
We then deduce, using \eqref{phi} that
\begin{equation*}
    \alpha_u(t)\geq \Big(\frac{a}{2}\|u\|^2-\varepsilon |u|_2^2\Big)t^2+\frac{b}{4}\|u\|^4t^4-c_\varepsilon t^p|u|_p^p.
\end{equation*}
Since $u\neq0,$ we can choose $\varepsilon$ in such a way that $\frac{a}{2}\|u\|^2-\varepsilon |u|_2^2>0$. It then follows, since $p>4$, that $\alpha_u(t)>0$ for $t>0$ sufficiently small.\\
On the other hand $(f_3)$ implies:
\begin{equation}\label{ine1}
    \forall \delta>0,\,\, \exists c_\delta>0 \,\,;\,\, F(x,u)\geq\delta|u|^4-c_\delta.
\end{equation}
This implies that
\begin{equation*}
    \alpha_u(t)\leq \frac{a}{2}t^2\|u\|^2+\frac{b}{4}t^4\|u\|^4-\delta t^4|u|_4^4+c_\delta|\Omega|.
\end{equation*}
If we choose $\delta>0$ big enough such that $\frac{b}{4}\|u\|^4-\delta|u|_4^4<0$, we see that $\alpha_u(t)\to-\infty$ as $t\to\infty$.
We deduce that $\alpha_u$ has a positive maximum.
\par Now noting that
\begin{equation*}
    \alpha'_u(t)=\big<\Phi'(tu),u\big>=a\|u\|^2t+b\|u\|^4t^3-\int_\Omega uf(x,tu)dx,
\end{equation*}
the equation $\alpha'_u(t)=0$ is equivalent to
\begin{equation*}
    b\|u\|^4=-\frac{a\|u\|^2}{t^2}+\frac{1}{t^3}\int\limits_\Omega uf(x,tu)dx.
\end{equation*}
By $(f_4)$ the map $t\mapsto\frac{1}{t^3}\int\limits_\Omega uf(x,tu)dx$ is increasing on $(0,\infty)$. It is then easy to deduce that the map $t\mapsto-\frac{a\|u\|^2}{t^2}+\frac{1}{t^3}\int\limits_\Omega uf(x,tu)dx$ is strictly increasing on $(0,\infty)$. Hence the maximum point of $\alpha_u$ is unique.
\end{proof}
The following lemma gives some properties of $t_u.$  Let
\begin{equation*}
    S:=\big\{u\in X\,;\,\|u\|=1\big\}.
\end{equation*}
\begin{lem}\label{tusuite}
If $(f_1)-(f_4)$ are satisfied then:
\begin{enumerate}
  \item There exists $\delta>0$ such that $t_u\geq\delta$ for every $u\in S$, where $t_u$ is as in Lemma \ref{tu} above.
  \item For any compact $K\subset S$, there exists a constant $C_K$ such that $t_u\leq C_K$ for every $u\in K$.
\end{enumerate}
\end{lem}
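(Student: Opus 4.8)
The plan is to treat the two assertions separately: the lower bound in (1) via a uniform estimate of the fibering derivative near the origin, and the upper bound in (2) via a compactness-and-contradiction argument driven by $(f_3)$.

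For assertion (1), I would work directly with $\alpha'_u(t)=a\|u\|^2t+b\|u\|^4t^3-\int_\Omega uf(x,tu)\,dx$. Fix $u\in S$, so $\|u\|=1$. Using the growth bound \eqref{ine} together with the Sobolev embeddings $|u|_2\leq C_2\|u\|$ and $|u|_p\leq C_p\|u\|$, one estimates $\big|\int_\Omega uf(x,tu)\,dx\big|\leq \varepsilon C_2^2 t+c_\varepsilon C_p^p t^{p-1}$ uniformly over $u\in S$. Discarding the nonnegative term $bt^3$ and choosing $\varepsilon=a/(2C_2^2)$, this yields $\alpha'_u(t)\geq t\big(\tfrac{a}{2}-c_\varepsilon C_p^p t^{p-2}\big)$, where the constants no longer depend on $u$. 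Since $p>4$ forces $p-2>0$, the right-hand side is strictly positive for all $t\in(0,\delta)$ with $\delta:=\big(a/(2c_\varepsilon C_p^p)\big)^{1/(p-2)}$. As $\alpha'_u$ vanishes at $t_u$ and is positive on $(0,t_u)$ by Lemma \ref{tu}, the inequality $\alpha'_u>0$ on $(0,\delta)$ forces $t_u\geq\delta$ for every $u\in S$.

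For assertion (2), I would argue by contradiction: if $\sup_{u\in K}t_u=\infty$, pick $u_n\in K$ with $t_{u_n}\to\infty$. Since $K$ is compact, after passing to a subsequence $u_n\to u$ in $X$ (hence a.e.) with $\|u\|=1$, so the set $\{u\neq0\}$ has positive measure. Writing $v_n:=t_{u_n}u_n$, we have $\|v_n\|=t_{u_n}\to\infty$, and because $t_{u_n}$ maximises $\alpha_{u_n}$ we get $\Phi(v_n)=\alpha_{u_n}(t_{u_n})\geq\alpha_{u_n}(0)=0$. This rearranges to $\int_\Omega F(x,v_n)/\|v_n\|^4\,dx\leq a/(2\|v_n\|^2)+b/4$, whose right-hand side tends to $b/4$. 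On $\{u\neq0\}$ we have $|v_n(x)|\to\infty$ a.e., and the identity $F(x,v_n)/\|v_n\|^4=\big(F(x,v_n)/|v_n|^4\big)|u_n|^4$ combined with $(f_3)$ shows that the integrand tends to $+\infty$ a.e. there; the lower bound $F(x,v_n)\geq-c_\delta$ from \eqref{ine1} makes the integrand bounded below by an integrable function, so Fatou's lemma gives $\liminf_n\int_\Omega F(x,v_n)/\|v_n\|^4\,dx=+\infty$, contradicting the bound $b/4$. Hence $\sup_{u\in K}t_u<\infty$.

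The main obstacle is assertion (2), and more precisely the reason it cannot be upgraded to a bound uniform over all of $S$: the argument relies on extracting a strongly convergent subsequence whose limit is nonzero a.e., so that $(f_3)$ can be activated pointwise, and this is exactly where the compactness of $K$ is indispensable, the unit sphere $S$ itself being noncompact in the infinite-dimensional space $X$. The estimate near the origin used for (1), by contrast, is genuinely uniform because it only involves the fixed Sobolev constants.
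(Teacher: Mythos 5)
Your proposal is correct, and its two halves compare differently with the paper. For assertion (2) your argument coincides with the paper's almost line by line: the same contradiction setup with $t_{u_n}\to\infty$ on a compact $K$, the same identity $F(x,t_{u_n}u_n)/t_{u_n}^4=|u_n|^4\,F(x,t_{u_n}u_n)/|t_{u_n}u_n|^4$, strong convergence to a limit of norm one so that $(f_3)$ can be applied a.e., and Fatou's lemma to contradict the bound coming from $\Phi(t_{u_n}u_n)\geq 0$; the only cosmetic difference is that you justify the lower bound needed for Fatou via \eqref{ine1} (which gives $F(x,s)\geq -c_\delta$), whereas the paper observes that $(f_2)$ and $(f_4)$ force $F\geq 0$ --- both are valid. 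For assertion (1), however, your route is genuinely different and more direct: you estimate the derivative $\alpha_u'(t)$ uniformly over $S$ using \eqref{ine} and the Sobolev constants, conclude $\alpha_u'>0$ on a fixed interval $(0,\delta)$, and then the sign description of $\alpha_u'$ in Lemma \ref{tu} immediately forces $t_u\geq\delta$, with an explicit value of $\delta$. The paper instead estimates values rather than derivatives: it shows $\Phi(\delta u)\geq\delta_\star>0$ uniformly on $S$, deduces $\alpha_u(t_u)\geq\delta_\star$, and rules out a sequence $t_{u_j}\to 0^+$ by continuity of $\Phi$ at the origin. Your version is quantitative and avoids the contradiction argument; the paper's version has the side benefit of producing the uniform bound $\alpha_u(t_u)\geq\delta_\star>0$ for all $u\in S$, i.e.\ $\inf_S\Psi\geq\delta_\star>0$, which is what makes $\Psi$ bounded below in the proof of Theorem \ref{main} and is also invoked at the start of part (2) of the paper's proof in the form $\Phi(t_{u_n}u_n)>0$ (your weaker bound $\Phi(t_{u_n}u_n)\geq\alpha_{u_n}(0)=0$ suffices there). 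If you adopt your derivative-based proof of (1), you should note separately that the maximum value $\alpha_u(t_u)$ is bounded away from $0$ on $S$, since the paper relies on that fact later.
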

\begin{proof}
\begin{enumerate}
  \item Let $u\in S$ and recall that $t_u$ is the unique point of maximum of the map $\alpha_u(t)=\Phi(tu)$. We deduce from \eqref{ine} and the Sobolev embedding theorem that
\begin{equation*}
    \Phi(w)\geq \Big(\frac{a}{2}-\varepsilon c_1\Big)\|w\|^2+\frac{b}{4}\|w\|^4-c_\varepsilon c_2\|w\|^p,\quad \forall w\in X\backslash\{0\},
\end{equation*}
where $c_1>0$ and $c_2>0$ are constants. By choosing $\varepsilon$ such that $\frac{a}{2}-\varepsilon c_1\geq\frac{a}{4}$, we obtain
\begin{equation*}
    \Phi(w)\geq \frac{a}{4}\|w\|^2+\frac{b}{4}\|w\|^4-c_3\|w\|^p,\quad \forall w\in X\backslash\{0\}.
\end{equation*}
There then exists  $\delta>0$ sufficiently small such that setting $w=\delta u$ we obtain
\begin{equation*}
    \Phi(\delta u)\geq\frac{a}{4}\delta^2+\frac{b}{4}\delta^4-c_3\delta^p>0,\quad\forall u\in S.
\end{equation*}
 Since $t_u>0$ is the unique point of maximum of the function $\alpha_u$ we have
\begin{equation*}
    \alpha_u(t_u)\geq \alpha_u(\delta)\geq \delta_\star:=\frac{a}{4}\delta^2+\frac{b}{4}\delta^4-c_3\delta^p>0,\quad\forall u\in S,
\end{equation*}
where $\delta_\star>0$ does not depend on $u\in S$. We would like to show that $t_u\geq\gamma>0$ for some $\gamma>0$ and for all $u\in S$. Suppose, on the contrary, that there is a sequence $(t_{u_j},u_j)$, with $u_j\in S$ such that $t_{u_j}\to0^+$. Since $u_j\in S$, we have $t_{u_j}u_j\to0$ in $X$ and so, in view of the continuity of $\Phi$, we obtain
\begin{equation*}
    0<\delta_\star\leq\alpha_{u_j}(t_{u_j})=\Phi(t_{u_j}u_j)\to0=\Phi(0)
\end{equation*}
which is a contradiction. Hence, there is $\gamma>0$ such that $t_u\geq\gamma>0$ for all $u\in S$.
  \item Let $K$ be a compact subset of $S$. Arguing by contradiction, we assume that there exists a sequence $(u_n)\subset K$ such that $t_{u_n}\to\infty.$
We know that there exists $\delta>0$ such that $\Phi(t_{u_n}u_n)\geq \Phi(\delta u_n)>0.$ Hence we have
\begin{align*}
    0<\frac{\Phi(t_{u_n}u_n)}{t^4_{u_n}}&=\frac{1}{t^4_{u_n}}\Big[\frac{a}{2}\|t_{u_n}u_n\|^2+\frac{b}{4}\|t_{u_n}u_n\|^4-\int\limits_\Omega F(x, t_{u_n}u_n)dx\Big]\\
&=\frac{a}{2t^2_{u_n}}+\frac{b}{4}-\int\limits_\Omega \frac{F(x,t_{u_n}u_n)}{t^4_{u_n}}dx\\
&=\frac{a}{2t^2_{u_n}}+\frac{b}{4}-\int\limits_\Omega |u_n|^4\frac{F(x,t_{u_n}u_n)}{|t_{u_n}u_n|^4}dx.\quad\quad\quad\quad (\star)
\end{align*}
Now since $K$ is compact, the sequence $(u_n)$ has a converging subsequence. We can then assume that $u_n\to u$ in $X$.  By the Sobolev embedding theorem $u_n\to u$ in $L^2(\Omega)$, and up to a subsequence $u_n(x)\to u(x)$ a.e. in $\Omega$. Clearly $\|u\|=1$, and consequently $u\neq0$ and $|t_{u_n}u_n|\to\infty$. We point out here that $(f_2)$ and $(f_4)$ imply $F(x,u)\geq0$ for all $(x,u)\in\Omega\times\mathbb{R}$. Hence by using Fatou's lemma and $(f_3)$ we obtain, by passing to the limit $n\to\infty$ in $(\star)$, the contradiction $0\leq-\infty$. Consequently, there exists $C_K>0$ such that $t_u\leq C_K$ for every $u\in K$.
\end{enumerate}
\end{proof}
Now we consider the following mappings.
\begin{equation*}
    M:S\to\mathcal{N},\quad \quad M(u):=t_uu,
\end{equation*}
\begin{equation*}
    \Psi:S\to\mathbb{R},\quad \quad \Psi(u):=\Phi\circ M(u).
\end{equation*}
 The next two lemmas are due to Szulkin and Weth \cite{S-W1}. Indeed, Lemmas \ref{tu} and \ref{tusuite} above show that the assumptions in \cite{S-W1} are satisfied.
\begin{lem}[\cite{S-W1}, Proposition $8$]
$M$ defined above is a homeomorphism between $S$ and $\mathcal{N}$ whose inverse $M^{-1}$ is given by
\begin{equation*}
    M^{-1}(u)=\frac{u}{\|u\|},\quad \forall u\in\mathcal{N}.
\end{equation*}
\end{lem}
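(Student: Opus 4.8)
The plan is to exhibit an explicit two-sided inverse for $M$ and then establish continuity on each side separately. Set $N:\mathcal{N}\to S$, $N(v):=v/\|v\|$; this is well defined since $0\notin\mathcal{N}$, and it is manifestly continuous because the norm is continuous and bounded away from zero on $\mathcal{N}$. Thus the only substantial point will be the continuity of $M$ itself, i.e. the continuity of the map $u\mapsto t_u$ on $S$.

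First I would check that $M$ takes values in $\mathcal{N}$ and that $N$ is its inverse. For $u\in S$, the point $t_u$ furnished by Lemma \ref{tu} satisfies $\alpha_u'(t_u)=0$, and since $\langle\Phi'(t_uu),t_uu\rangle=t_u\alpha_u'(t_u)$ with $t_u>0$, this gives $t_uu\in\mathcal{N}$. The identity $N\circ M=\mathrm{id}_S$ is immediate from $\|t_uu\|=t_u$. For $M\circ N=\mathrm{id}_{\mathcal{N}}$, fix $v\in\mathcal{N}$ and put $u:=v/\|v\|\in S$. Since $\langle\Phi'(v),v\rangle=0$ and $\alpha_u'(\|v\|)=\langle\Phi'(v),u\rangle=\|v\|^{-1}\langle\Phi'(v),v\rangle$, we obtain $\alpha_u'(\|v\|)=0$; by the uniqueness part of Lemma \ref{tu}, $t_u=\|v\|$, whence $M(u)=t_uu=v$. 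This proves that $M$ is a bijection with $M^{-1}=N$, and in particular recovers the stated formula.

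The hard part will be the continuity of $u\mapsto t_u$, and here Lemma \ref{tusuite} does the work. Let $u_n\to u$ in $S$. The set $K:=\{u\}\cup\{u_n\}$ is compact in $S$, so by Lemma \ref{tusuite} the sequence $(t_{u_n})$ is trapped in $[\delta,C_K]$ with $\delta>0$; in particular it is bounded and stays away from $0$. Along any subsequence with $t_{u_{n_k}}\to t_0$ we have $t_0\in[\delta,C_K]$ and $t_{u_{n_k}}u_{n_k}\to t_0u$ in $X$. Since each $t_{u_{n_k}}u_{n_k}$ lies in $\mathcal{N}$ and $\Phi'$ is continuous, passing to the limit in $\langle\Phi'(t_{u_{n_k}}u_{n_k}),t_{u_{n_k}}u_{n_k}\rangle=0$ yields $\langle\Phi'(t_0u),t_0u\rangle=0$, i.e. $\alpha_u'(t_0)=0$ (using $t_0>0$). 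The uniqueness in Lemma \ref{tu} then forces $t_0=t_u$. Thus every convergent subsequence of the bounded sequence $(t_{u_n})$ has the same limit $t_u$, so $t_{u_n}\to t_u$, and therefore $M(u_n)=t_{u_n}u_n\to t_uu=M(u)$. Combined with the continuity of $N=M^{-1}$, this shows $M$ is a homeomorphism.
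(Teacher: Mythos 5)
Your proof is correct. Note, however, that the paper does not prove this lemma at all: it simply cites \cite{S-W1}, Proposition $8$, remarking that Lemmas \ref{tu} and \ref{tusuite} guarantee that the hypotheses of that abstract result hold. What you have done is reconstruct, self-containedly, essentially the argument behind the cited proposition: bijectivity of $M$ comes from the uniqueness of the critical point of $\alpha_u$ (Lemma \ref{tu}), the explicit inverse $v\mapsto v/\|v\|$ falls out of $\|t_u u\|=t_u$, and continuity of $u\mapsto t_u$ comes from trapping $(t_{u_n})$ in $[\delta, C_K]$ via Lemma \ref{tusuite} and identifying every subsequential limit through the relation $\langle\Phi'(t_0u),t_0u\rangle=0$ plus uniqueness. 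This is exactly the right division of labor and matches how the paper's two preparatory lemmas are meant to be used; the only blemish is your claim that $N$ is continuous ``because the norm is bounded away from zero on $\mathcal{N}$'' --- that lower bound is true but nowhere established in your argument, and it is also unnecessary, since $v\mapsto v/\|v\|$ is continuous on all of $X\setminus\{0\}$ and $0\notin\mathcal{N}$ by definition. In short: where the paper outsources the proof, yours fills it in correctly.
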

We recall that a sequence $(u_n)\subset X$ is said to be a Palais-Smale sequence for a functional $\varphi\in\mathcal{C}^1(X,\mathbb{R})$ if
\begin{equation*}
    \varphi'(u_n)\to0 \quad \text{and}\quad \sup_n|\varphi(u_n)|<\infty.
\end{equation*}
If every such sequence has a convergent subsequence, then $\varphi$ is said to satisfy the Palais-Smale condition.
\begin{lem}[\cite{S-W1}, Corollary $10$]\label{reduction}
\begin{itemize}
  \item [(a)] $\Psi\in\mathcal{C}^1(S,\mathbb{R})$ and
  \begin{equation*}
    \big<\Psi'(u),v\big>=\|M(u)\|\big<\Phi'(M(u)),v\big> \textnormal{ for all }v\in T_u(S),
  \end{equation*}
  where $T_u(S)$ is the tangent space of $S$ at $u$.
  \item [(b)] If $(u_n)$ is a Palais-Smale sequence for $\Psi,$ then $(M(u_n))$ is a Palais-Smale sequence for $\Phi$. If $(u_n)\subset\mathcal{N}$ is a bounded Palais-Smale sequence for $\Phi$, then $(M^{-1}(u_n))$ is a Palais-Smale sequence for $\Psi$.
  \item [(c)] $u$ is a critical point of $\Psi$ if and only if $M(u)$ is a nontrivial critical point of $\Phi$. Moreover, the corresponding critical values coincide and $inf_{S}\Psi=inf_\mathcal{N}\Phi$.
  \item [(d)] If $\Phi$ is even, then so is $\Psi$.
\end{itemize}
\end{lem}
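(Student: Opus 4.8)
The plan is to derive one central object, the differentiation formula for $\Psi$ in part (a), and then obtain (b)--(d) as essentially algebraic consequences of it, using the bounds on $t_u$ from Lemma \ref{tusuite} and the homeomorphism property of $M$ proved just above. For part (a) I would \emph{not} try to show directly that the scalar map $u\mapsto t_u$ is of class $\mathcal{C}^1$ (delicate precisely because $\mathcal{N}$ need not be $\mathcal{C}^1$), but instead use an envelope-type argument. Fix $u\in S$ and $z\in T_u(S)$, and consider the curve $\sigma(\tau)=(u+\tau z)/\|u+\tau z\|\in S$ for small $\tau$, so that $\sigma(0)=u$ and $\sigma'(0)=z$ (the normalization contributes nothing, since $\langle u,z\rangle=0$). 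Writing $h(\tau,s):=\Phi(s\,\sigma(\tau))$, Lemma \ref{tu} guarantees that for each $\tau$ the function $s\mapsto h(\tau,s)$ has a unique maximizer $s=t_{\sigma(\tau)}$, and $\Psi(\sigma(\tau))=h(\tau,t_{\sigma(\tau)})$. Since $\Phi\in\mathcal{C}^1$, $h$ is $\mathcal{C}^1$, and since $M$ is continuous the maximizer $\tau\mapsto t_{\sigma(\tau)}$ is continuous; the standard two-sided estimate for a max-value function (sandwiching $\Psi(\sigma(\tau))-\Psi(\sigma(\tau_0))$ between $h(\tau,t_{\sigma(\tau_0)})-h(\tau_0,t_{\sigma(\tau_0)})$ and $h(\tau,t_{\sigma(\tau)})-h(\tau_0,t_{\sigma(\tau)})$) then gives differentiability with $\frac{d}{d\tau}\Psi(\sigma(\tau))=\partial_\tau h(\tau,s)\big|_{s=t_{\sigma(\tau)}}$, the contribution of the moving maximizer dropping out because $\partial_s h=0$ there. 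Evaluating at $\tau=0$ gives $\partial_\tau h(0,s)=s\langle\Phi'(su),z\rangle$, hence $\langle\Psi'(u),z\rangle=t_u\langle\Phi'(t_uu),z\rangle=\|M(u)\|\langle\Phi'(M(u)),z\rangle$, using $\|M(u)\|=t_u$ for $u\in S$. Continuity of the right-hand side in $u$ (via continuity of $M$ and $\Phi'$) upgrades this to $\Psi\in\mathcal{C}^1(S,\mathbb{R})$.

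The observation that makes the rest work is that for $u\in S$ the point $M(u)=t_uu\in\mathcal{N}$ satisfies $\langle\Phi'(M(u)),u\rangle=0$ (since $\langle\Phi'(M(u)),M(u)\rangle=0$ and $t_u>0$), so $\Phi'(M(u))$ annihilates $\mathbb{R}u$; using the orthogonal splitting $X=\mathbb{R}u\oplus T_u(S)$ one gets $\|\Phi'(M(u))\|_{X^\star}=\sup_{z\in T_u(S),\,\|z\|=1}\langle\Phi'(M(u)),z\rangle=\|M(u)\|^{-1}\|\Psi'(u)\|$. For part (b), if $(u_n)$ is Palais--Smale for $\Psi$ then $\Phi(M(u_n))=\Psi(u_n)$ is bounded and, since $\|M(u_n)\|=t_{u_n}\geq\delta>0$ by Lemma \ref{tusuite}(1), $\|\Phi'(M(u_n))\|_{X^\star}\leq\delta^{-1}\|\Psi'(u_n)\|\to0$; conversely, if $(w_n)\subset\mathcal{N}$ is a bounded Palais--Smale sequence for $\Phi$, then $u_n:=M^{-1}(w_n)=w_n/\|w_n\|$ gives $\Psi(u_n)=\Phi(w_n)$ bounded and $\|\Psi'(u_n)\|=\|w_n\|\,\|\Phi'(w_n)\|\to0$ because $(\|w_n\|)$ is bounded. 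For part (c), $\langle\Psi'(u),z\rangle=0$ for all $z\in T_u(S)$ together with $\langle\Phi'(M(u)),u\rangle=0$ forces $\Phi'(M(u))=0$ on all of $X$, and conversely every nontrivial critical point $w$ of $\Phi$ lies in $\mathcal{N}$ and equals $M(w/\|w\|)$; since $M(u)\neq0$ this is exactly the stated correspondence, the equality of critical values is immediate from $\Psi=\Phi\circ M$, and $\inf_S\Psi=\inf_\mathcal{N}\Phi$ follows because $M$ maps $S$ bijectively onto $\mathcal{N}$. Part (d) is the simplest: if $\Phi$ is even then $\alpha_{-u}=\alpha_u$, so $t_{-u}=t_u$ by the uniqueness in Lemma \ref{tu}, whence $M(-u)=-M(u)$ and $\Psi(-u)=\Phi(-M(u))=\Phi(M(u))=\Psi(u)$.

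The main obstacle is part (a): everything else rests on the derivative formula, and the one genuine difficulty is justifying differentiation of $\tau\mapsto\Psi(\sigma(\tau))$ without knowing in advance that $u\mapsto t_u$ is smooth. The envelope/Danskin estimate circumvents this, relying only on the \emph{uniqueness} of $t_u$ (Lemma \ref{tu}) and the \emph{continuity} of $u\mapsto t_u$ (from $M$ being a homeomorphism). The uniform lower bound $t_u\geq\delta$ of Lemma \ref{tusuite}(1) then plays the decisive quantitative role in part (b), being exactly what converts $\Psi'(u_n)\to0$ into $\Phi'(M(u_n))\to0$.
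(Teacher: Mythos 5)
Your proposal is correct, but there is nothing in the paper to compare it against line by line: the paper does not prove this lemma at all, quoting it from Szulkin and Weth \cite{S-W1} (Corollary $10$ there) and contributing only the remark that Lemmas \ref{tu} and \ref{tusuite} verify the hypotheses required in \cite{S-W1}. What you have written is in substance a faithful reconstruction of the cited Szulkin--Weth proof: the same Danskin/envelope sandwich for part (a), exploiting only uniqueness of $t_u$ and continuity of $u\mapsto t_u$ (they carry it out for the extension $\hat\Psi(w):=\Phi(\hat m(w))$ on the open set $X\backslash\{0\}$, where the passage from continuous directional derivatives to $\mathcal{C}^1$ is the standard G\^{a}teaux-to-Fr\'{e}chet criterion, and then restrict to $S$ --- the one point your part (a) only gestures at), and the same norm identity $\|\Psi'(u)\|=\|M(u)\|\,\|\Phi'(M(u))\|_{X^\star}$ obtained from $\big<\Phi'(M(u)),u\big>=0$ and the splitting $X=\mathbb{R}u\oplus T_u(S)$, combined with the lower bound $t_u\geq\delta$ of Lemma \ref{tusuite} for part (b); parts (c) and (d) then follow exactly as you say.
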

Finally our multiplicity result will be deduced from the following lemma.
\begin{lem}[\cite{S}]\label{szulkin}
Let $X$ be an infinite dimensional Hilbert space and let $J\in\mathcal{C}^1(S,\mathbb{R})$ be even. If $J$ is bounded below and satisfies the Palais-Smale condition, then it possesses infinitely many distinct pairs of critical points.
\end{lem}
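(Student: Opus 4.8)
The plan is to prove this abstract multiplicity statement by Lusternik--Schnirelmann theory based on the Krasnoselskii $\mathbb{Z}_2$-genus. First I would recall the genus $\gamma(A)$, defined for closed symmetric sets $A\subset X\backslash\{0\}$ (hence for symmetric subsets of $S$) as the least integer $n$ for which an odd continuous map $A\to\mathbb{R}^n\backslash\{0\}$ exists, with the usual conventions $\gamma(\emptyset)=0$ and $\gamma(A)=\infty$ when no such $n$ exists. The properties I will invoke are: monotonicity and supervariance under odd continuous maps; subadditivity $\gamma(A\cup B)\le\gamma(A)+\gamma(B)$; the fact that a nonempty set consisting of finitely many antipodal pairs has genus $1$; the identity $\gamma(S\cap V)=\dim V$ for a finite-dimensional subspace $V$ (Borsuk--Ulam); and the neighborhood property that a compact symmetric set admits a closed symmetric neighborhood of the same genus. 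Since $X$ is infinite dimensional, for each $k$ one finds a $k$-dimensional subspace $V_k$, and $S\cap V_k$ is a symmetric subset of $S$ of genus exactly $k$.

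Next I would introduce the minimax values. Let $\Gamma_k:=\{A\subset S:A\text{ closed},\ A=-A,\ \gamma(A)\ge k\}$ and set $c_k:=\inf_{A\in\Gamma_k}\sup_{u\in A}J(u)$. Each $\Gamma_k$ is nonempty by the previous step, $\sup_{S\cap V_k}J<\infty$ by continuity on a compact set, and $J$ is bounded below by hypothesis; hence every $c_k$ is a finite real number and $-\infty<\inf_S J\le c_1\le c_2\le\cdots<\infty$, the monotonicity coming from $\Gamma_{k+1}\subset\Gamma_k$. These are the candidate critical values.

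The technical heart, and the step I expect to be the main obstacle, is an equivariant deformation lemma on the constraint manifold $S$. Since $S$ is a complete $\mathcal{C}^\infty$ Hilbert manifold and $J\in\mathcal{C}^1(S,\mathbb{R})$ satisfies the Palais-Smale condition, one has a quantitative deformation lemma; the issue is to make the deformation odd. Because $J$ is even its derivative is odd, so a pseudo-gradient field for $J$ on $S$ can be symmetrized into an odd locally Lipschitz field whose truncated negative flow yields an odd deformation $\eta$ with the standard property: writing $K_c:=\{u\in S:J(u)=c,\ J'(u)=0\}$ (a symmetric, and by Palais-Smale compact, subset of $S$) and $J^d:=\{u\in S:J(u)\le d\}$, for every symmetric open neighborhood $N$ of $K_c$ there is $\varepsilon>0$ with $\eta(J^{c+\varepsilon}\backslash N)\subset J^{c-\varepsilon}$, and one may take $N=\emptyset$ when $c$ is not a critical value. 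Compactness of $K_c$ is exactly what makes the neighborhood property of the genus applicable here.

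Finally I would run the standard Lusternik--Schnirelmann argument, whose core is the claim: if $c:=c_k=c_{k+1}=\cdots=c_{k+m-1}$ for some $m\ge1$, then $\gamma(K_c)\ge m$. To see this, suppose instead $\gamma(K_c)\le m-1$ and pick a symmetric open neighborhood $N$ of $K_c$ with $\gamma(\overline N)\le m-1$; choose $A\in\Gamma_{k+m-1}$ with $A\subset J^{c+\varepsilon}$. Since $N$ is open, $\overline{A\backslash N}$ is a closed symmetric subset of $J^{c+\varepsilon}\backslash N$, and subadditivity applied to $A\subset\overline{A\backslash N}\cup\overline N$ gives $\gamma(\overline{A\backslash N})\ge(k+m-1)-(m-1)=k$. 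Then $\eta(\overline{A\backslash N})\in\Gamma_k$ by supervariance, while $\sup_{\eta(\overline{A\backslash N})}J\le c-\varepsilon<c_k$, contradicting the definition of $c_k$. Taking $m=1$ shows each $c_k$ is a critical value. The conclusion now splits into two cases. If the nondecreasing sequence $(c_k)$ takes infinitely many distinct values, these are infinitely many distinct critical values, and selecting one critical point at each level gives infinitely many distinct antipodal pairs $\{u,-u\}$. Otherwise $(c_k)$ is eventually constant, so $c_k=c_{k+1}$ at some level $c$, whence $\gamma(K_c)\ge2$; a set of genus at least $2$ cannot be a finite union of antipodal pairs, so $K_c$ is infinite and again yields infinitely many distinct pairs of critical points. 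This completes the proof.
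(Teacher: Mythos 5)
Your proposal is correct, but the first thing to say is that the paper contains no proof of this statement to compare against: Lemma \ref{szulkin} is imported as a black box from Szulkin \cite{S} and is invoked only once, at the end of the proof of Theorem \ref{main}, to produce infinitely many critical points of $\Psi$ on $S$. What you have written is a sound self-contained reconstruction of the classical Krasnoselskii-genus minimax proof, specialized to the unit sphere of a Hilbert space: the levels $c_k=\inf_{A\in\Gamma_k}\sup_A J$ are finite because $S\cap V_k$ is a compact symmetric set of genus $k$ (this is where infinite-dimensionality of $X$ enters) and $J$ is bounded below; the multiplicity estimate $\gamma(K_c)\geq m$ for repeated levels is carried out with the quantifiers in the right order ($N$ and $\varepsilon$ fixed before $A$ is chosen) and with the correct observation that openness of $N$ gives $\overline{A\setminus N}\subset J^{c+\varepsilon}\setminus N$; and the final dichotomy (infinitely many distinct values, or a repeated value forcing $\gamma(K_c)\geq 2$, hence an infinite $K_c$, since a finite nonempty symmetric subset of $S$ has genus $1$) is the standard and correct conclusion. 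The genuine difference from the cited source is one of generality: Szulkin's paper develops the deformation machinery on complete symmetric $\mathcal{C}^1$-Finsler manifolds, where the tangent bundle is merely continuous and the construction of (odd) pseudo-gradient flows is the delicate technical point, whereas your argument exploits that $S$ is a smooth complete Riemannian submanifold with $T_uS=T_{-u}S=u^{\perp}$, so the usual symmetrization $v(u)\mapsto\frac{1}{2}\big(v(u)-v(-u)\big)$ of a pseudo-gradient field works without difficulty. For the present paper this specialization is harmless and even natural: the entire point of the Szulkin--Weth reduction \cite{S-W1} used here is to trade the possibly non-$\mathcal{C}^1$ Nehari manifold $\mathcal{N}$ for the smooth sphere $S$, so your elementary argument fully covers the case actually needed, at the modest cost of proving less than the cited theorem. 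If you write it up, make explicit that the time-one deformation map is a homeomorphism of $S$ (so $\eta(\overline{A\setminus N})$ is closed; otherwise pass to its closure before applying the monotonicity of the genus under odd maps).
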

\section{Proof of the main result}\label{section2}
We shall prove our main result by applying Lemma \ref{reduction}. First we verify the Palais-Smale condition.
\begin{lem}\label{ps}
$\Phi|_{\mathcal{N}}$ satisfies the Palais-Smale condition, that is, every Palais-Smale sequence for $\Phi|_{\mathcal{N}}$ has a convergent subsequence.
\end{lem}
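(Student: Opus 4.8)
The plan is to take a Palais--Smale sequence $(u_n)\subset\mathcal N$ for $\Phi|_{\mathcal N}$, so that $\Phi(u_n)$ is bounded and the constrained derivative tends to zero, and to split the proof into two independent tasks: first showing that $(u_n)$ is bounded in $X$, and then upgrading weak convergence to strong convergence. Since we only assume the $4$-superlinear condition $(f_3)$ and the monotonicity $(f_4)$ in place of the stronger \eqref{AR}, the boundedness step is where the real difficulty lies: the usual coercivity estimate that \eqref{AR} provides is unavailable, and I would instead exploit the variational characterization of $\mathcal N$ furnished by Lemma \ref{tu}. Note also that because $\langle\Phi'(u_n),u_n\rangle=0$ on $\mathcal N$ while the tangential component of $\Phi'(u_n)$ is controlled by the constrained derivative (and $\|u_n\|$ stays bounded below by Lemma \ref{tusuite}), the correspondence of Lemma \ref{reduction} lets me assume in addition that $\Phi'(u_n)\to0$ in $X^*$.

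For the boundedness I would argue by contradiction, supposing $\|u_n\|\to\infty$ and setting $v_n:=u_n/\|u_n\|\in S$. After passing to a subsequence I may take $v_n\rightharpoonup v$ in $X$, with $v_n\to v$ in $L^2(\Omega)$ and $L^p(\Omega)$ and $v_n\to v$ a.e., the compactness coming from $p<2^\star$. I then distinguish two cases. If $v\neq0$, I divide $\Phi(u_n)$ by $\|u_n\|^4$ and rewrite the nonlinear term as $\int_\Omega|v_n|^4\,F(x,u_n)/|u_n|^4\,dx$, exactly as in the computation $(\star)$ of Lemma \ref{tusuite}; since $F\geq0$, condition $(f_3)$ forces the integrand to blow up on the set $\{v\neq0\}$, so Fatou's lemma drives $\Phi(u_n)/\|u_n\|^4\to-\infty$, contradicting the boundedness of $\Phi(u_n)$. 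If instead $v=0$, I use the maximality property from Lemma \ref{tu}: since $u_n=t_{v_n}v_n$ with $t_{v_n}=\|u_n\|$, one has $\Phi(u_n)=\max_{t>0}\Phi(tv_n)\geq\Phi(Rv_n)$ for every fixed $R>0$; the strong convergence $v_n\to0$ in $L^2$ and $L^p$ together with the bound \eqref{ine} on $F$ gives $\int_\Omega F(x,Rv_n)\,dx\to0$, whence $\liminf_n\Phi(u_n)\geq \tfrac a2R^2+\tfrac b4R^4$, and letting $R\to\infty$ again contradicts boundedness.

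Once $(u_n)$ is known to be bounded I would extract $u_n\rightharpoonup u$ in $X$ with $u_n\to u$ in $L^2(\Omega)$ and $L^p(\Omega)$. Testing $\Phi'(u_n)\to0$ against $u_n-u$ gives
\[
\bigl(a+b\|u_n\|^2\bigr)\langle u_n,u_n-u\rangle-\int_\Omega(u_n-u)f(x,u_n)\,dx\to0 .
\]
The growth bound \eqref{ine} on $f$, combined with the strong $L^2$ and $L^p$ convergence of $u_n-u$ and the boundedness of $(u_n)$, shows by Hölder's inequality that the integral tends to $0$; since $a+b\|u_n\|^2\geq a>0$ stays bounded away from zero, this yields $\langle u_n,u_n-u\rangle\to0$. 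As $\langle u,u_n-u\rangle\to0$ by weak convergence, subtracting gives $\|u_n-u\|^2\to0$, that is, strong convergence in $X$. I expect the boundedness step to be the only genuine obstacle; the remaining compactness argument is routine and relies only on the subcriticality $p<2^\star$ and the growth condition $(f_1)$.
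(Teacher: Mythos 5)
Your proof is correct and follows essentially the same route as the paper: boundedness by contradiction on the normalized sequence $v_n=u_n/\|u_n\|$, splitting into the case $v=0$ (handled via the maximality of $t\mapsto\Phi(tv_n)$ at $t=\|u_n\|$) and the case $v\neq0$ (handled via $F\geq0$, condition $(f_3)$ and Fatou's lemma), followed by strong convergence of the weakly convergent subsequence using the compact embeddings. The only cosmetic difference is in the final step, where you test $\Phi'(u_n)$ against $u_n-u$ and estimate the nonlinear term directly by \eqref{ine} and H\"older's inequality, whereas the paper expands $\big<\Phi'(u_n)-\Phi'(u),u_n-u\big>$ and invokes the continuity of the Nemytskii operator (Theorem A.2 in \cite{W}); both variants are equally valid.
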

\begin{proof}
Let $(u_n)\subset \mathcal{N}$ such that $d:=\sup_{n}\Phi(u_n)<\infty$ and $\Phi'(u_n)\to0$. We want to show that the sequence $(u_n)$ has a convergent subsequence.
\par First we show that $(u_n)$ is bounded. Arguing by contradiction, we assume that $(u_n)$ is unbounded. Hence, up to a subsequence we have $\|u_n\|\to\infty$ and $v_n:=\frac{u_n}{\|u_n\|}\rightharpoonup v$. By definition of $t_{v_n}$ we have for all $t>0$
\begin{equation*}
    \Phi(t_{v_n}v_n)\geq\Phi(tv_n)=\frac{a}{2}t^2+\frac{b}{4}t^4-\int\limits_\Omega F(x,tv_n)dx.
\end{equation*}
Now since $v_n=M^{-1}(u_n)$, then $u_n=t_{v_n}v_n$ and
\begin{equation}\label{majo}
    d\geq\Phi(u_n)\geq\frac{a}{2}t^2+\frac{b}{4}t^4-\int\limits_\Omega F(x,tv_n)dx.
\end{equation}
If $v=0$, then the Rellich-Kondrashov theorem implies that $v_n\to0$ in $L^2(\Omega)$ and in $L^p(\Omega)$. By using \eqref{ine} we deduce that for every $\varepsilon>0$
\begin{equation*}
    \int\limits_\Omega F(x,tv_n)dx\leq\varepsilon t^2|v_n|_2^2+c_\varepsilon t^p|v_n|_p^p\to0.
\end{equation*}
We then obtain by taking the limit $n\to\infty$ in \eqref{majo}
\begin{equation*}
    d\geq\frac{a}{2}t^2+\frac{b}{4}t^4.
\end{equation*}
But this leads to a contradiction if we take $t$ sufficiently large. Consequently we have $v\neq0$.
By \eqref{phi} and the definition of $v_n$ we have
\begin{equation*}
    0\leq\frac{\Phi(u_n)}{\|u_n\|^4}=\frac{a}{2\|u_n\|^2}+\frac{b}{4}-\int\limits_\Omega|v_n|^4\frac{F(x,\|u_n\|v_n)}{\big|\|u_n\|v_n\big|^4}dx.
\end{equation*}
Since $\big|\|u_n\|v_n\big|\to\infty,$ we obtain by using one more time Fatou's lemma the contradiction $0\leq-\infty$.
The sequence $(u_n)$ is then bounded.
\par Up to a subsequence we have $u_n\rightharpoonup u$ in $X$. By Rellich-Kondrashov theorem $u_n\to u$ in $L^p(\Omega)$. One can easily verify, using \eqref{phi} and \eqref{phiprime} that
\begin{align*}
     \big(a+b&\|u_n\|^2\big)\|u_n-u\|^2=\big<\Phi'(u_n)-\Phi'(u),u_n-u\big>\\
&-b\big(\|u_n\|^2-\|u\|^2\big)\int\limits_\Omega\nabla u\nabla(u_n-u)dx
+\int\limits_\Omega(u_n-u)\big(f(x,u_n)-f(x,u)\big)dx.
\end{align*}
Clearly we have
\begin{equation*}
    \big<\Phi'(u_n)-\Phi'(u),u_n-u\big>\to0 \text{ and }\big(\|u_n\|^2-\|u\|^2\big)\int\limits_\Omega\nabla u\nabla(u_n-u)dx\to0.
\end{equation*}
By the H\"{o}lder inequality
\begin{equation*}
    \Big|\int\limits_\Omega(u_n-u)\big(f(x,u_n)-f(x,u)\big)dx\Big|\leq |u_n-u|_p\big|f(x,u_n)-f(x,u)\big|_{\frac{p}{p-1}}.
\end{equation*}
By $(f_1)$ $f$ satisfies the assumption of Theorem $A.2$ in \cite{W}. Hence $\big|f(x,u_n)-f(x,u)\big|_{\frac{p}{p-1}}\to0$ and consequently
\begin{equation*}
     \big(a+b\|u_n\|^2\big)\|u_n-u\|^2\to0,
\end{equation*}
which implies that $u_n\to u$ in $X$.
\end{proof}
\begin{proof}[Proof of Theorem \ref{main}]
We know from Lemma \ref{reduction}-$(a)$ that $\Psi$ is of class $\mathcal{C}^1$ on $S$. Since $\Psi$ is also bounded below on $S$, Ekeland's variational principle yields the existence of a sequence $(u_n)\subset S$ such that
\begin{equation*}
    \Psi(u_n)\to\inf_S\Psi\quad\text{ and }\quad \Psi'(u_n)\to0.
\end{equation*}
By Lemma \ref{reduction} the sequence $\big(v_n:=M(u_n)\big)\subset\mathcal{N}$ is a Palais-Smale sequence for $\Phi$. By Lemma \ref{ps}, we have $v_n\to v$ up to a subsequence. Since $M$ is a homeomorphism we deduce that $u_n\to u:=M^{-1}(v)$. Hence $\Psi'(u)=0$ and $\Psi(u)=\inf_S \Psi$. By Lemma \ref{reduction}-$(c)$ $v$ is a nontrivial critical point of $\Phi$, and
\begin{equation*}
    \Phi(v)=\Psi(u)=\inf_S\Psi=\inf_\mathcal{N}\Phi.
\end{equation*}
It follows that $v$ is a ground state solution of \eqref{s}.
\par Now $(f_5)$ implies that $\Phi$ is even. Hence by Lemma \ref{reduction}-$(d)$ $\Psi$ is also even. We have seen above that $\Psi\in\mathcal{C}^1(S,\mathbb{R})$ is bounded below and satisfies the Palais-Smale condition. It then follows from Lemma \ref{szulkin} that $\Psi$ has infinite many distinct pairs of critical points. Hence $\Phi$ has infinitely many critical points by Lemma \ref{reduction}, and consequently \eqref{s} has infinitely many solutions.
\end{proof}
Finally, we present an example to illustrate that there is a nonlinear function $f$ which satisfies the
conditions $(f_1)-(f_5)$, but does not satisfy the condition \eqref{AR}.
\begin{exa}
\textnormal{Let $f(x,u)=u^3\ln\big(1+|u|\big)$. Integrating by parts we get
\begin{equation*}
F(x,u)=\frac{1}{4}u^4\ln\big(1+|u|\big)-\frac{1}{4}\Big(\frac{1}{4}u^4-\frac{1}{3}|u|^3+\frac{1}{2}|u|^2-|u|-\ln\big(1+|u|\big)\Big).
\end{equation*}
It is readily seen that the assumptions $(f_1)-(f_5)$ are satisfied.\\
Now it is well known that integrating \eqref{AR} yields the existence of a constant $c_1>0$ such that $F(x,u)\geq c_1|u|^\mu$ for $|u|$ large. Therefore if \eqref{AR} is satisfied for our example above, then we have for $|u|$ large
\begin{equation*}
\frac{1}{4}u^4\ln\big(1+|u|\big)-\frac{1}{4}\Big(\frac{1}{4}u^4-\frac{1}{3}|u|^3+\frac{1}{2}|u|^2-|u|-\ln\big(1+|u|\big)\Big)\geq c_1|u|^\mu.
\end{equation*}
Dividing the two members of this inequality by $|u|^\mu$ and letting $|u|\to\infty$ we get, since $\mu>4$, the contradiction $0\geq c_1$. This shows that the condition \eqref{AR} is not satisfied in our case.}
\end{exa}
\section*{Acknowledgement}
We are grateful to the referees for their careful reading of the paper, and for a number of helpful comments for improvement in this article.
%

%
%
\end{document}